\begin{document}
\newtheorem{theorem}{Theorem}[section]
\newtheorem{lemma}[theorem]{Lemma}
\newtheorem{corollary}[theorem]{Corollary}
\newtheorem{prop}[theorem]{Proposition}
\newtheorem{definition}[theorem]{Definition}
\newtheorem{remark}[theorem]{Remark}

 \def\ad#1{\begin{aligned}#1\end{aligned}}  \def\b#1{{\bf #1}} \def\hb#1{\hat{\bf #1}}
\def\a#1{\begin{align*}#1\end{align*}} \def\an#1{\begin{align}#1\end{align}}
\def\e#1{\begin{equation}#1\end{equation}} \def\t#1{\hbox{\rm{#1}}}
\def\dt#1{\left|\begin{matrix}#1\end{matrix}\right|}
\def\p#1{\begin{pmatrix}#1\end{pmatrix}} \def\c{\operatorname{curl}}
 \def\vc{\nabla\times } \numberwithin{equation}{section}
 \def\la{\circle*{0.25}}
\def\boxit#1{\vbox{\hrule height1pt \hbox{\vrule width1pt\kern1pt
     #1\kern1pt\vrule width1pt}\hrule height1pt }}
 \def\lab#1{\boxit{\small #1}\label{#1}}
  \def\mref#1{\boxit{\small #1}\ref{#1}}
 \def\meqref#1{\boxit{\small #1}\eqref{#1}}
\long\def\comment#1{}

\def\lab#1{\label{#1}} \def\mref#1{\ref{#1}} \def\meqref#1{\eqref{#1}}

\def\bg#1{{\pmb #1}} 

\title  [rectangular Bell elements]
   {Rectangular $C^1$-$Q_k$  Bell finite elements in two and three dimensions}

\author {Hongling Hu}
\address{School of Mathematics and Statistics, MOE-LCSM, Hunan Normal University, Changsha 410081, China} 
\email{honglinghu@hunnu.edu.cn}
\thanks{
Hongling Hu is supported by the National Natural Science Foundation
of China (No. 12071128) and the Natural Science Foundation of Hunan Province,
China (No. 2021JJ30434). }

\author { Shangyou Zhang }
\address{Department of Mathematical  Sciences, University of Delaware, Newark, DE 19716, USA.}
\email{ szhang@udel.edu }

\date{}

\begin{abstract}
Both the function and its normal derivative on the element boundary are $Q_k$ polynomials
  for the Bogner-Fox-Schmit $C^1$-$Q_k$ finite element functions.
Mathematically, to keep the optimal order of approximation,  their spaces are required to
   include $P_k$ and $P_{k-1}$ polynomials respectively.
We construct a Bell type $C^1$-$Q_k$ finite element on rectangular meshes in 2D and 3D,
   which has its normal derivative as a $Q_{k-1}$ polynomial on each face, for $k\ge 4$.
We show, with a big reduction of the space, the $C^1$-$Q_k$ Bell
  finite element retains the optimal order of convergence.
Numerical experiments are performed, comparing the new elements with the original elements.
  
\end{abstract}

\vskip .3cm

\keywords{  biharmonic equation; conforming element; rectangular element,
    finite element; rectangular mesh; cuboid mesh. }

\subjclass{ 65N15, 65N30 }

\maketitle

\section{Introduction} 
The construction and analysis on the finite elements mainly started 
   when some engineers use the method to solve the following plate bending equation,
\an{\label{bi} \ad{ \Delta^2 u & = f \quad \t{in } \ \Omega, \\
        u=\partial_{\b n} u & =0  \quad \t{on } \ \partial\Omega, } }
where $\Omega$ is a polygonal domain in 2D or 3D, and $\b n$ is a normal vector,
cf.  the $C^1$-$P_3$ Hsieh-Clough-Tocher element (1961,1965) \cite{Ciarlet,Clough}, 
  the $C^1$-$P_3$ Fraeijs de Veubeke-Sander element (1964,1965) \cite{Fraeijs,Fraeijs68,Sander}  
 the $C^1$-$P_5$  Argyris element (1968) \cite{Argyris},
  the $C^1$-$P_4$ Bell element (1969) \cite{Bell},
 and  the $C^1$-$Q_3$ Bogner-Fox-Schmit (BFS)  element (1965) \cite{Bogner}.
 
These lowest order elements are extended to general polynomial degrees $k$.
The $C^1$-$P_3$ Hsieh-Clough-Tocher element was extended to the $C^1$-$P_k$ ($k\ge 3$) 
   finite elements in \cite{Douglas,ZhangMG}. 
The $C^1$-$Q_3$ Bogner-Fox-Schmit element was extended to three families of $C^1$-$Q_k$ ($k\ge 3$)
  finite elements on rectangular meshes in \cite{Zhang-C1Q}.
The $C^1$-$P_3$ Fraeijs de Veubeke-Sander element is extended to two families
  of $C^1$-$P_k$ ($k\ge 3$) finite elements in \cite{Zhang-F}.  
The $C^1$-$P_5$  Argyris element was extended to the family of 
   $C^1$-$P_k$ ($k\ge 5$) finite elements in \cite{Zen70,Zlamal}. 
The $C^1$-$P_5$  Argyris element was modified and extended to the family of  
   $C^1$-$P_k$ ($k\ge 5$) full-space finite elements in \cite{Morgan-Scott}.
The $C^1$-$P_5$  Argyris element was also extended to 3D $C^1$-$P_k$ ($k\ge 9$)
   elements on tetrahedral meshes in \cite{Zenisek,Z3d,Z4d}.
The $C^1$-$P_4$ Bell element was extended to three families of 
    $C^1$-$P_{2m+1}$ ($m\ge 3$) finite elements in \cite{Xu-Zhang7,Xu-Zhang}. 
The Bell finite elements do not have any degrees of freedom on edges. 
Thus they must be odd-degree polynomials 
  (the $P_4$ Bell element is a subspace of $P_5$ polynomials.)
The $C^1$-$P_k$ Bell element space is a reduced space, i.e., a subspace of the space of the
    $C^1$-$P_{k}$ Argyris finite element. 
  
In this work, we consider such a reduced space for the $C^1$-$Q_k$ Bogner-Fox-Schmit element.
Both the function and its normal derivative on the element boundary are $Q_k$ polynomials
  for the Bogner-Fox-Schmit $C^1$-$Q_k$ finite element functions.
Mathematically, to keep the optimal order of approximation,  their spaces are required to
   include $P_k$ and $P_{k-1}$ polynomials respectively.
Thus, we construct a Bell type $C^1$-$Q_k$ finite element on rectangular meshes in 2D and 3D,
   which has its normal derivative as a $Q_{k-1}$ 
     (instead of $Q_k$) polynomial on each face, for $k\ge 4$, cf. Figure \ref{f-2e}.

\begin{figure}[H] \centering \setlength\unitlength{1.8pt}
\begin{picture}(180,90)(0,0)
\def\gds{\begin{picture}(160,160)(0,0)
  \multiput(0,0)(40,0){2}{\line(0,1){40}}\multiput(0,0)(0,40){2}{\line(1,0){40}}
  \multiput(0,0)(20,0){3}{\multiput(0,0)( 0,20){3}{\circle*{2}}}
  \multiput(1,1)(40,0){2}{\multiput(0,0)( 0,40){2}{\vector(1,0){6}}}
  \multiput(1,1)(40,0){2}{\multiput(0,0)( 0,40){2}{\vector(0,1){6}}}
  \multiput(1,0)(40,0){2}{\multiput(0,0)( 0,40){2}{\vector(1,1){6}}}
  \multiput(0,1)(40,0){2}{\multiput(0,0)( 0,40){2}{\vector(1,1){6}}}\end{picture} }
\put(0,0){\begin{picture}(80,80)(0,0)
  \multiput(0,0)(0,40){2}{\multiput(0,0)(40,0){2}{\gds}}
  \multiput(20,1)(40,0){2}{\multiput(0,0)( 0,40){3}{\vector(0,1){6}}}
  \multiput(1,20)(0,40){2}{\multiput(0,0)(40, 0){3}{\vector(1,0){6}}}
  \end{picture} }
  
\put(100,0){\begin{picture}(80,80)(0,0)
  \multiput(0,0)(0,40){2}{\multiput(0,0)(40,0){2}{\gds}}
  \end{picture} }
   \end{picture}
   \caption{\label{f-2e}Left: degrees of freedom of the $C^1$-$Q_4$ BFS finite element on $n^2$ rectangles:
        $9n^2+12n+4$; Right: degrees of freedom of the $C^1$-$Q_4$ Bell finite element: $ 7n^2+10n+4$.
         }
   \end{figure}
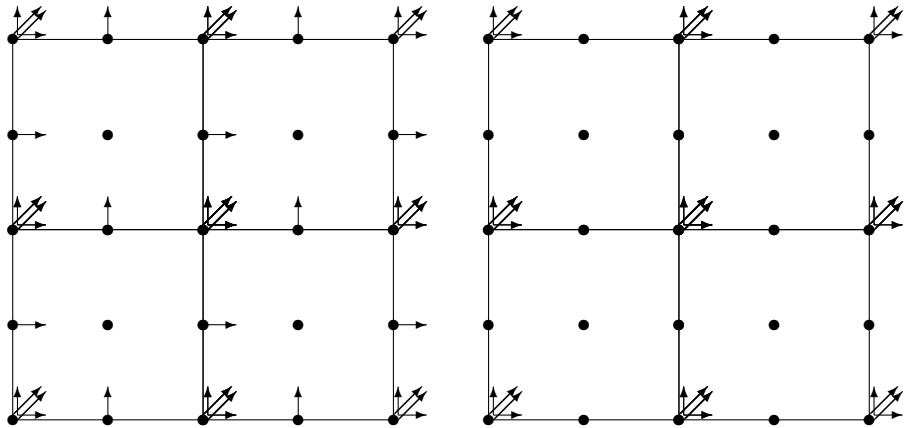
  
The idea is from the Bell triangular element, restricting the normal derivative to the space of
  one degree less polynomials.
But the idea is better used here as the order of convergence of the original $C^1$-$Q_k$ element
   is retained, while it is reduced by one order in the case of triangular Bell element.
We can see the reduction from Figure \ref{f-2e} where the lowest order 2D case is depicted.
On one element, the number of degrees of freedom is reduced from $25$ to $21$, only the
  normal derivative dof is removed at each of the four mid-edge points.
But globally, the dof ratio is 9:7, or equivalently the original method 
   has a $28\%$ more number of unknowns.
The reduction in 3D is even more. To be exact, the number of global degrees of freedom of
  the Bogner-Fox-Schmit $C^1$-$Q_4$ element in 3D is $50\%$ more than that of its Bell element,
    cf. Table \ref{t5}.

In this work, we show the uni-solvency and the optimal order of convergence
   of the new Bell $C^1$-$Q_k$ elements ($k\ge 4$) in 2D and 3D.
The theory is confirmed by numerical tests.
 
\section{The 2D $C^1$-$Q_k$ Bell finite elements}

Let $\mathcal Q_h=\{T\}$ be a square-of-size-$h$ mesh in 2D,
   or a cube-of-size-$h$ mesh in 3D, on the polygonal or polyhedral
  domain $\Omega$ in \eqref{bi}.
  
On a square or a cube,  we define, for $k\ge 4$,  
\an{\label{V-T} V_T =\{ v \in Q_k(T) : \partial_{\b n} v|_e \in Q_{k-1}(e), \ e \in \partial T\}, }
where $\partial_{\b n}$ denotes a normal derivative on the edge or the face-square $e$,
   and $Q_k=\t{span} \{ x_1^{ k_1}\cdots x_d^{k_d} : 0\le k_i\le k \}$ for $d=1,2,3$. 

For 2-dimensional $V_T$ on a size $h$ square $T$,  
    the degrees of freedom of the Bell element are defined by, cf.
   Figure \ref{f-dof2}, 
\an{\label{dof2} F_m(v) = \begin{cases}
     v ,  & \t{at } \ \b x_1+\frac h{k-2}(i, j), \; i,j=0,\dots,k-2, \\
     \partial_x v, 
          & \t{at } \ \b x_1+ h (i,\frac j{k-3}), \; i=0,1, \ j=0,\dots,k-3, \\ 
     \partial_y v, 
          & \t{at } \ \b x_1+ h (\frac i{k-3},j) , \; i=0,\dots, k-3,\ j=0,1, \\
     \partial_{xy} v, 
          & \t{at } \ \b x_1+h(i,j), \; i,j=0, 1. \end{cases} }

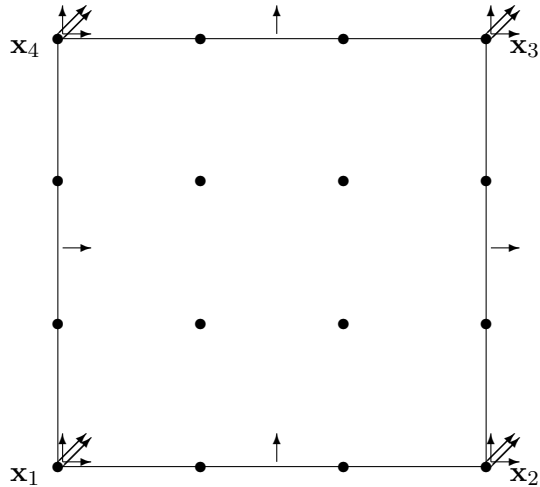
\begin{figure}[H] \centering \setlength\unitlength{1.8pt}
\begin{picture}(110,110)(0,0)
\put(10,5){\begin{picture}(110,110)(0,0) 
  \put(-10,-3){$\b x_1$}   \put(95,-3){$\b x_2$} 
  \put(-10,87){$\b x_4$}   \put(95,87){$\b x_3$} 
  \multiput(0,0)(90,0){2}{\line(0,1){90}}\multiput(0,0)(0,90){2}{\line(1,0){90}}
  \multiput(0,0)(30,0){4}{\multiput(0,0)( 0,30){4}{\circle*{2}}}
  \multiput(1,1)(90,0){2}{\multiput(0,0)( 0,45){3}{\vector(1,0){6}}}
  \multiput(1,1)(0,90){2}{\multiput(0,0)(45, 0){3}{\vector(0,1){6}}}
  \multiput(1,0)(90,0){2}{\multiput(0,0)( 0,90){2}{\vector(1,1){6}}}
  \multiput(0,1)(90,0){2}{\multiput(0,0)( 0,90){2}{\vector(1,1){6}}}\end{picture} } 
   \end{picture}
   \caption{\label{f-dof2} The degrees of freedom of the $C^1$-$Q_5$ Bell finite element,
       cf. \eqref{dof2}.
         }
   \end{figure}

\begin{lemma}  \label{l2d}
The set of degrees of freedom in \eqref{dof2} uniquely define a $V_T$ function in 
  \eqref{V-T}.
\end{lemma}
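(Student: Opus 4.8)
The plan is to reduce unisolvence to an injectivity statement and then exploit the tensor-product structure. First I would check that the number of functionals in \eqref{dof2} equals $\dim V_T$, so that it suffices to prove that the only $v\in V_T$ killed by all the $F_m$ is $v\equiv0$. Summing \eqref{dof2} gives $(k-1)^2+4(k-2)+4=(k+3)(k-1)$ functionals. For the dimension, I would rescale so that $\b x_1=0$ and $h=1$ (the reference square $[0,1]^2$), write $v=\sum_{i,j}c_{ij}x^iy^j$, and observe that each of the four conditions $\partial_{\b n}v|_e\in Q_{k-1}(e)$ simply kills the top-degree coefficient of the one-variable restriction, namely $c_{k1}=0$, $c_{1k}=0$, $\sum_j jc_{kj}=0$, $\sum_i ic_{ik}=0$. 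These are visibly linearly independent (e.g. $c_{k2}$ occurs only in the third, $c_{2k}$ only in the fourth), so $\dim V_T=(k+1)^2-4=(k+3)(k-1)$, matching the count.

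For injectivity, suppose $v\in V_T$ with $F_m(v)=0$ for all $m$. I would first show $v$ vanishes on $\partial T$, one edge at a time. On the bottom edge $y=0$ the restriction $v|_{y=0}$ is a one-variable polynomial of degree $\le k$; the value functionals give its vanishing at the $k-1$ nodes $x=i/(k-2)$ (endpoints included), and the two corner functionals $\partial_x v$ at $(0,0)$ and $(1,0)$ give vanishing of its derivative at both endpoints. Factoring $v|_{y=0}=\omega(x)q(x)$ with $\omega(x)=\prod_{i=0}^{k-2}(x-i/(k-2))$ and $\deg q\le1$, the two endpoint-derivative conditions force $q(0)=q(1)=0$, hence $v|_{y=0}\equiv0$; the other three edges are identical. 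Therefore $v=x(x-1)y(y-1)\,w$ with $w\in Q_{k-2}(T)$.

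Next I would pin down $w$ on $\partial T$, and this is where the defining structure of $V_T$ is essential. The four mixed functionals $\partial_{xy}v$ at the corners reduce after the factorization to $\pm w$ at the corners, giving $w=0$ there, and each remaining non-corner normal-derivative functional on an edge gives $w=0$ at an interior edge node. The decisive point is that membership $v\in V_T$ is still in force: a direct computation gives $\partial_y v|_{y=0}=-x(x-1)\,w(x,0)$, and requiring this to lie in $Q_{k-1}$ forces the degree-$(k-2)$ coefficient of $w(x,0)$ to vanish, i.e. $\deg w(x,0)\le k-3$. Thus $w(x,0)$ has at most $k-2$ coefficients yet vanishes at the $k-2$ assembled nodes (two corners plus $k-4$ interior edge nodes), so $w(x,0)\equiv0$; likewise on the other edges. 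Hence $w=x(x-1)y(y-1)\,w_2$ with $w_2\in Q_{k-4}(T)$.

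Finally, the $(k-3)^2$ interior value functionals give $v=0$, hence $w_2=0$, on the tensor grid $(i/(k-2),j/(k-2))$, $i,j=1,\dots,k-3$; since this grid has $k-3$ distinct abscissae and $k-3$ distinct ordinates, tensor-product Lagrange interpolation is unisolvent for $Q_{k-4}$, so $w_2\equiv0$ and $v\equiv0$. I expect the main obstacle to be precisely the degree-reduction step: without using $\partial_{\b n}v|_e\in Q_{k-1}(e)$, the available edge nodes fall exactly one short of determining $w|_e$, and it is the reduced normal-derivative space defining $V_T$ that closes this gap. As a sanity check I would treat $k=4$ separately, where there are no interior edge nodes and a single interior value node, and verify the counts degenerate correctly.
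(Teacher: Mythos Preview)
Your proof is correct and follows essentially the same route as the paper: match the number of functionals to $\dim V_T$, show $v$ vanishes on each edge from the value and tangential-derivative data, then use the defining constraint $\partial_{\b n}v|_e\in Q_{k-1}$ together with the $\partial_{xy}$ and remaining normal-derivative data to force the normal derivative to vanish on each edge, factor out the squared bubble, and kill the $Q_{k-4}$ remainder with the $(k-3)^2$ interior nodes. The only cosmetic difference is the order of operations---you first factor out the full bubble $x(x-1)y(y-1)$ and then argue on the quotient $w$, whereas the paper factors out $(y-y_1)^2$ edge by edge---and your write-up is a bit more explicit about verifying the four constraints are independent and about exactly where the $Q_{k-1}$ hypothesis enters.
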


\begin{proof} The number of degrees of freedom in \eqref{dof2} is
\an{ \label{n2dof} N_d &=(k-1)^2 + 2(2(k-2))+ 4. }
The number of constraint equations in \eqref{V-T} is, one derivative restriction each
   edge,
\an{\label{n2e} N_e &=4. }
Adding \eqref{n2dof} and \eqref{n2e},  we get
\a{ N_d+N_e &= (k-1)^2 + 4(k-1) + 4=(k+1)^2=\dim Q_k.  }
Thus, to determine a $v_h\in V_T$,  we have a square system of linear equations of $(k+1)^2$
   unknowns.
The uniqueness implies the existence of the solution.

Let $v_h\in V_T$ and $F_m(v_h)=0$ for all dofs in \eqref{dof2}.   We need to show $v_h=0$.

Because $v$ vanishes at $k-1$ points on an edge $\b x_1\b x_2$ and $\partial_x v$ vanishes at
  the two end points, the degree-$k$ polynomial $v|_{\b x_1\b x_2}=0$.
Which implies
\a{ v_h = (y-(\b x_1)_2) q_{k,k-1}}
for some $\ q_{k,k-1}\in \t{span} \{ x^i y^j,
   \ 0\le i\le k, \ 0\le j\le k-1\}$, 
   where $(\b x_1)_2$ is the $y$-coordinate of $\b x_1$.

Because the degree $k-1$ polynomial $\partial_y v|_{\b x_1\b x_2}$ vanishes at $k-2$ points on 
   $\b x_1\b x_2$ and its tangential derivative ($\partial_{xy}v_h$) vanishes at
  the two end points, we  $\partial_y v|_{\b x_1\b x_2}=0$.
Thus, we can factor out another factor that 
\a{ v_h = (y-(\b x_1)_2)^2 q_{k,k-2}}
for some $\ q_{k,k-2}\in \t{span} \{ x^i y^j,
   \ 0\le i\le k, \ 0\le j\le k-2\}$.
  
  Repeating the calculation on the four edges, we have 
\a{v_h = (x-(\b x_1)_1)^2(x-(\b x_3)_1)^2 (y-(\b x_1)_2)^2(y-(\b x_3)_2)^2q_{k-4}}
for some $\ q_{k-4 }\in Q_{k-4}$.  Evaluating $v_h$ at the inner $(k-3)^2$
   points inside $T$, cf. Figure \ref{f-dof2},  we conclude that $q_{k-4}=0$ and $v_h=0$.
  The lemma is proved. 
\end{proof}

\section{The 3D $C^1$-$Q_k$ Bell finite elements}
 
For the 3-dimensional $V_T$ in \eqref{V-T}, on a size $h$ cube $T$,  
    the degrees of freedom of the Bell element are defined by, cf.
   Figure \ref{f-dof3}, $F_m(v) = $ 
\an{\label{dof3} \begin{cases}
     v ,\partial_x v, \partial_y v,   \partial_z v,  \partial_{xy} v,  \partial_{xz} v, 
      \partial_{yz} v, \partial_{xyz} v, 
         \ \quad \t{at eight vertices}, \\
      v,  \ \quad\qquad\qquad \qquad\qquad\t{at $k-3$ mid-points on 12 edges}, \\
    \partial_{\b n_1} v, \partial_{\b n_2} v, \partial_{\b n_1\b n_2} v,\ \quad 
        \t{at $k-4$ mid-points on 12 edges}, \\
      v,  \ \quad \qquad\qquad \qquad \t{at $(k-3)^2$ mid-points on 6 face-squares}, \\
  \partial_{\b n} v,  \ \qquad \qquad\qquad \t{at $(k-4)^2$ mid-points on 6 face-squares}, \\
      v,  \ \qquad\quad \qquad \qquad \t{at $(k-3)^3$ mid-points inside the cube}. \end{cases} }
     
\begin{figure}[H] \centering \setlength\unitlength{2.2pt}
\begin{picture}(70,70)(0,0)
\put(5,5){\begin{picture}(110,110)(0,0) 
  \put(-8,-2){$\b x_2$} \put(43,-2){$\b x_3$} \put(63,18){$\b x_4$} \put(63,58){$\b x_8$} 
  \put(-10,38){$\b x_6$}  
  \multiput(0,0)(40,0){2}{\line(0,1){40}}\multiput(0,0)(0,40){2}{\line(1,0){40}}
  \multiput(40,0)(20,20){2}{\line(0,1){40}}\multiput(0,40)(20,20 ){2}{\line(1,0){40}}
  \multiput(40,0)( 0,40){2}{\line(1,1){20}}\multiput(0,40)(20,20){1}{\line(1,1){20}}
  \multiput(0,40)(13.33,0){4}{ \multiput(0,0)(6.66,6.66){4}{\circle*{2}}}
  \multiput(0.5,40)(20,0){3}{ \multiput(0,0)(10,10){3}{\vector(0,1){6}}}
  \multiput(40,40.3)(40,0){1}{ \multiput(0,0)(20,20){2}{\vector(1, -1){5}}}
  \multiput(40.3,40.6)(40,0){1}{ \multiput(0,0)(20,20){2}{\vector(1, -1){5}}} 
  
  \multiput(-0.6,40.3)(40,0){2}{ \multiput(0,0)(20,20){2}{\vector(-1, 2){3}}}
  \multiput(-0.7,39.6)(40,0){2}{ \multiput(0,0)(20,20){2}{\vector(-1, 2){3}}}
  \multiput( 0.6,40.3)(40,0){2}{ \multiput(0,0)(20,20){1}{\vector(-1, -1){5}}}
  \multiput( 0.2,39.6)(40,0){2}{ \multiput(0,0)(20,20){1}{\vector(-1, -1){5}}}
  \multiput( 0.2,39.6)(40,0){2}{ \multiput(0,0)(20,20){2}{\vector( 1, 2){5}}}
  \multiput( 0.5,39.4)(40,0){2}{ \multiput(0,0)(20,20){2}{\vector( 1, 2){5}}}
  \multiput( 0.8,39.1)(40,0){2}{ \multiput(0,0)(20,20){2}{\vector( 1, 2){5}}}
  \end{picture} } 
   \end{picture}
   \caption{\label{f-dof3} The (visible) degrees of freedom of the $C^1$-$Q_5$ Bell finite element
      on the top of cube, cf. \eqref{dof3}. All mid-edge $\partial_{xz}v$ and $\partial_{yz}v$
       are not visible.
         }
   \end{figure}
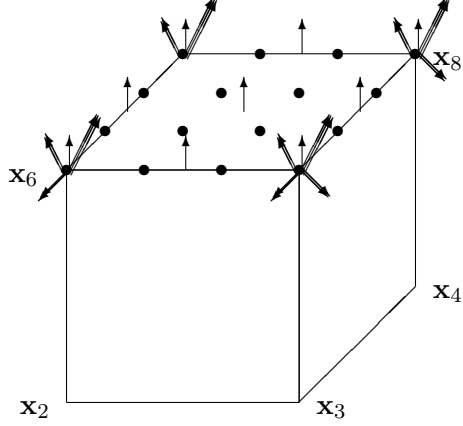

\begin{lemma}  
The set of degrees of freedom in \eqref{dof3} uniquely define a $V_T$ function in 
  \eqref{V-T}.
\end{lemma}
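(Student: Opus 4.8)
The plan is to follow the proof of Lemma \ref{l2d}: reduce the interpolation to a square system by counting, then prove uniqueness by peeling off the twelve edges, the six faces (via the two-dimensional result), and finally the interior. First I would record the counts. The function-value dofs in \eqref{dof3} fill a tensor grid of $(k-1)^3$ nodes on the cube, since the $8$ vertices, the $12(k-3)$ edge nodes, the $6(k-3)^2$ face nodes and the $(k-3)^3$ interior nodes sum to $((k-3)+2)^3$; adding the $7\cdot 8$ vertex derivatives, the $3\cdot 12(k-4)$ edge derivatives and the $6(k-4)^2$ face normal derivatives gives
\[ N_d=(k-1)^3+56+36(k-4)+6(k-4)^2=k^3+3k^2-9k+7. \]
The constraint in \eqref{V-T} cuts $\partial_{\b n}v|_e$ down from $Q_k$ to $Q_{k-1}$ on each face, i.e. $2k+1$ conditions per face; but along each of the $12$ edges the two adjacent faces annihilate the same single ``corner'' coefficient, so there are $12$ coincidences and only $12k-6$ independent conditions. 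Since $N_d+(12k-6)=(k+1)^3=\dim Q_k$, the problem is a square linear system and it suffices to prove uniqueness.

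Now let $v\in V_T$ annihilate every dof in \eqref{dof3}. On each edge $v$ is a degree-$k$ polynomial vanishing at the $k-1$ edge nodes (two vertices and $k-3$ interior nodes) whose tangential derivative vanishes at the two endpoints, so $v\equiv 0$ on all $12$ edges. On a face $e$ I would show that $w:=v|_e$ is a two-dimensional element of the space \eqref{V-T} all of whose dofs \eqref{dof2} vanish, and then apply Lemma \ref{l2d}. Indeed, writing $\b m$ for the in-face normal to a face-edge, the required reduction $\partial_{\b m}w\in Q_{k-1}$ on that edge is exactly the vanishing of the top tangential coefficient, which is furnished by the $V_T$ constraint of the \emph{adjacent} face; and the vanishing of the nodal, edge-normal and corner-cross dofs \eqref{dof2} of $w$ is read off from the vertex, edge and face dofs of the cube. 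Hence $v\equiv 0$ on all six faces.

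It remains to kill the face normal derivatives and the interior. Fix a face and set $g:=\partial_{\b n}v|_e$, so $g\in Q_{k-1}$ by \eqref{V-T}. On each face-edge, both $g$ and its in-face normal derivative $\partial_{\b m}g$ are degree-$(k-1)$ polynomials; using the edge dofs $\partial_{\b n}v,\partial_{\b n\b n'}v$ at the $k-4$ interior edge nodes together with the matching vertex dofs $\partial_{\b n}v,\partial_{\b n\b n'}v,\partial_{\b n\b n'\b n''}v$, I would assemble $k-2$ values and two endpoint tangential derivatives, i.e. $k$ Hermite conditions for a degree-$(k-1)$ polynomial, for each of $g$ and $\partial_{\b m}g$; both therefore vanish on every face-edge. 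Consequently $g$ carries the squared boundary factor, $g=[x(x-h)y(y-h)]^2\hat g$ with $\hat g\in Q_{k-5}$, and the $(k-4)^2$ interior face dofs $\partial_{\b n}v$ force $\hat g\equiv 0$ (for $k=4$ the factor already exceeds the degree of $g$, so $g\equiv 0$ outright). With $v$ and $\partial_{\b n}v$ vanishing on all six faces, $v$ is divisible by $[x(x-h)y(y-h)z(z-h)]^2$, whence $v=[x(x-h)y(y-h)z(z-h)]^2q$ with $q\in Q_{k-4}$, and the $(k-3)^3$ interior nodes give $q\equiv 0$ and $v\equiv 0$.

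The main obstacle is the face normal-derivative step. Unlike in two dimensions, where the normal derivative on an edge is a single univariate polynomial, here $g$ is a genuine bivariate $Q_{k-1}$ function, and the point is to see that the Bell reduction to $Q_{k-1}$ is precisely what makes the Hermite counts along the four face-edges close and leaves a quotient in $Q_{k-5}$ matching the $(k-4)^2$ interior face dofs. A secondary subtlety, with no analogue in the 2D proof, is the bookkeeping of the $12$ edge-coincidences among the face constraints that is needed to render the global count square.
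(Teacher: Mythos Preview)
Your proposal is correct and follows the paper's overall strategy---dimension count plus uniqueness via face-by-face reduction to the 2D lemma---with two differences worth noting. On the count, the paper simply asserts $2k-1$ constraints per face (which, read literally, miscounts $\dim Q_k(e)-\dim Q_{k-1}(e)=2k+1$) yet lands on the right total $6(2k-1)=12k-6$; your version, with $2k+1$ raw conditions per face and $12$ edge redundancies, is the honest bookkeeping behind that number. On the face normal derivative, the paper observes that the surviving dofs for $g=\partial_{\b n}v|_e\in Q_{k-1}$ (namely $g,\partial_x g,\partial_y g,\partial_{xy}g$ at corners, $g$ and $\partial_{\b m}g$ at $k-4$ edge points, $g$ at $(k-4)^2$ interior points) are exactly those of the 2D $C^1$-$Q_{k-1}$ Bogner--Fox--Schmit element and invokes that unisolvency in one line; your Hermite-and-factor argument re-proves this from scratch, which is more self-contained but longer. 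Your explicit check that $w=v|_e$ inherits the 2D Bell edge constraint $\partial_{\b m}w\in Q_{k-1}$ from the adjacent 3D face is a point the paper leaves implicit. The separate edge step at the start of your uniqueness argument is harmless but redundant, since it is already contained in the 2D Bell lemma applied to any face containing that edge.
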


\begin{proof} The number of degrees of freedom in \eqref{dof3} is
\an{ \label{n3dof} \ad{ N_d &=64+12 (4k-15)+6 ((k-4)^2+(k-3)^2)+(k-3)^3 \\
                       &=k^3 + 3 k^2 - 9 k + 7. }  }
To reduce a 2D $Q_k$ polynomial's degree by 1,  we need to post $2k-1$ constraints for
  $x^k, x^{k} y, \dots, x^k y^k, \dots, x y^{k}$ and $ y^k$.
Thus the number of constraint equations in \eqref{V-T} is,  
\an{\label{n3e} N_e &=6(2k-1). }
Adding \eqref{n3dof} and \eqref{n3e},  we get
\a{ N_d+N_e &=k^3 +3 k^2 + 3 k +1 \\
            & =(k+1)^3=\dim Q_k.  }
Thus, to determine a $v_h\in V_T$ in 3D,  
   we have a square system of linear equations of $(k+1)^3$ unknowns.
The uniqueness implies the existence of the solution.

Let $v_h\in V_T$ and $F_m(v_h)=0$ for all dofs in \eqref{dof3}.   We need to show $v_h=0$.
It would use the proof in 2D several times.

On the top square $\b x_5\b x_6\b x_7\b x_8$, the restriction of $v_h$ is a 2D Bell $C^1$-$Q_k$ element.
By the degrees of freedom $\{ v(\b n_l^{(0)}), \partial_x v(\b n_l^{(1)}),   
   \partial_y v(\b n_l^{(2)}),  \partial_{xy} v(\b n_l^{(4)})\}$ and Lemma \ref{l2d},
   we have $v_h|_{\b x_5\b x_6\b x_7\b x_8} = 0 $ and 
\a{ v_h = (z-(\b x_5)_3) q_{k,k,k-1} }
for some $q_{k,k,k-1} \in \t{span} \{ x^i y^j z^l,
   \ 0\le i,j\le k, \ 0\le l\le k-1\}$.

On the top square $\b x_5\b x_6\b x_7\b x_8$, the restriction of $\partial_z v_h$ is a 2D 
  BFS $C^1$-$Q_{k-1}$ element because of the normal derivative restriction in \eqref{V-T}.
By the degrees of freedom $\{ \partial_{z}v(\b n_l^{(3)}), \partial_{xz} v(\b n_l^{(5)}),   
   \partial_{yz} v(\b n_l^{(6)}),  \partial_{xyz} v(\b n_l^{(7)})\}$ and 
    the uni-solvency of the 2D BFS element,
   we have $\partial_{z}v_h|_{\b x_5\b x_6\b x_7\b x_8} = 0 $ and 
\a{ v_h =(z-(\b x_5)_3)^2 q_{k ,k ,k-2} }
for some $q_{k ,k ,k-2} \in \t{span} \{ x^i y^j z^l,
   \ 0\le i,j\le k, \ 0\le l\le k-2\}$.
  
Repeating above analysis on the other five face squares,  we obtain 
\an{\label{qk-4} v_h = b^2 q_{k-4} }
for some $q_{k-4} \in Q_{k-4}$, where $b=(x-x_1)( x_1+h-x)(y-y_1)( y_1+h-y)
        (z-z_1)( z_1+h-z)$ and $\b x_1 = (x_1,y_1,z_1)$.

For the $Q_{k-4}$ polynomial $q_{k-4}$ in \eqref{qk-4},
  it and $v_h$ vanish at $(k-3)^3$ internal points, defined in \eqref{dof3}.
Thus $q_{k-4}=0$ and $v_h=0$.
  The lemma is proved. 
\end{proof}

\section{The finite element solution and convergence}

The $C^1$-$Q_k$ ($k\ge 4$) Bell finite element spaces are defined by
\an{ \label{V-h} V_h =\{ v_h \in H^2_0(\Omega) : v_h = \sum_{i=1}^N c_i \phi_i \}, }
where $\{ \phi_i\} $ is a basis of the global space which combines the local basis functions
  dual to the degrees of freedom \eqref{dof2} or \eqref{dof3}.

 The finite element discretization of the biharmonic equation \eqref{bi} reads:
   Find $u\in V_h $ such that
\an{\label{finite} (\Delta u, \Delta v) = (f, v) \quad \forall v\in V_h , }
where $V_h$ is defined in \eqref{V-h}.

\begin{lemma} The finite element equation 
    \eqref{finite} has a unique solution. \end{lemma}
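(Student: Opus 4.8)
The final statement to prove is that the finite element equation \eqref{finite} has a unique solution. This is a standard Lax-Milgram argument, so the plan is to verify the hypotheses of the Lax-Milgram lemma for the bilinear form $a(u,v) = (\Delta u, \Delta v)$ on the finite-dimensional space $V_h \subset H^2_0(\Omega)$.

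First I would observe that $V_h$ is a finite-dimensional subspace of $H^2_0(\Omega)$; this is precisely the content of the definition \eqref{V-h}, and the two uni-solvency lemmas guarantee that the local degrees of freedom \eqref{dof2} and \eqref{dof3} determine the elements, so that the global basis $\{\phi_i\}$ is well-defined and $C^1$-conforming. Since $V_h$ is finite-dimensional, it is a Hilbert space under the $H^2$-inner product, and the linear system in \eqref{finite} is square (an $N\times N$ system in the coefficients $c_i$). Hence existence and uniqueness are equivalent, and it suffices to prove that the only solution of the homogeneous problem is zero.

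\medskip

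Next I would verify coercivity of $a(\cdot,\cdot)$ on $H^2_0(\Omega)$. The bilinear form is clearly bounded, $|a(u,v)| \le \|\Delta u\|_{0}\,\|\Delta v\|_{0} \le \|u\|_{2}\,\|v\|_{2}$. For coercivity I would invoke the standard fact that for $v \in H^2_0(\Omega)$ one has $\|\Delta v\|_{0}^2 = |v|_{2}^2$ (the full $H^2$-seminorm, after integration by parts using the vanishing boundary traces), together with the Poincar\'e--Friedrichs inequality, which bounds the full $H^2$-norm by the seminorm $|v|_2$ on $H^2_0(\Omega)$. This yields $a(v,v) = \|\Delta v\|_0^2 \ge \alpha \|v\|_2^2$ for some $\alpha>0$. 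Since $V_h \subset H^2_0(\Omega)$, coercivity is inherited on $V_h$.

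\medskip

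Finally I would conclude: if $v_h \in V_h$ solves the homogeneous equation $a(v_h, v) = 0$ for all $v \in V_h$, then taking $v = v_h$ gives $a(v_h, v_h) = 0$, whence $\|v_h\|_2 = 0$ by coercivity and $v_h = 0$. By the equivalence of existence and uniqueness for a square linear system, the solution of \eqref{finite} exists and is unique. I do not expect a genuine obstacle here; the only subtlety worth a sentence is that the argument relies entirely on the conformity $V_h \subset H^2_0(\Omega)$, which is exactly what the construction of the $C^1$ Bell elements and the two preceding uni-solvency lemmas provide, so no integration-by-parts consistency error enters and the abstract Lax-Milgram framework applies verbatim.
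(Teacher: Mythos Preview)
Your proof is correct and follows the same overall skeleton as the paper: reduce to a square system, set $f=0$, test with $u_h$, and deduce $u_h=0$ from $(\Delta u_h,\Delta u_h)=0$. The only noticeable difference is in that last step. You invoke the identity $\|\Delta v\|_0^2=|v|_2^2$ on $H^2_0(\Omega)$ together with Poincar\'e--Friedrichs to get coercivity in the $H^2$-norm, i.e.\ the standard Lax--Milgram route. The paper instead argues more directly: from $\|\Delta u_h\|_0=0$ it concludes $\Delta u_h=0$ on each element, and since $u_h\in H^2_0(\Omega)\subset H^1_0(\Omega)$ this makes $u_h$ a weak solution of the homogeneous Dirichlet Laplace problem, whence $u_h=0$ by uniqueness for that problem. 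Your approach is the cleaner textbook argument and generalizes immediately to any coercive conforming discretization; the paper's variant avoids the integration-by-parts identity at this point (it is stated later, in the convergence theorem) and instead leans on well-posedness of the Laplace equation. Either way the conformity $V_h\subset H^2_0(\Omega)$ is the only substantive input, exactly as you note.
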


\begin{proof}
By letting $v_h=u_h$ and $f=0$ in the linear equation 
    \eqref{finite},  we get $\Delta u_h=0$ on every $T$.
Because $u_h\in H^2_0(\Omega)$,   $u_h$ is a global solution of the Laplace equation,
\a{ \Delta u=0 \ \t{in }\ \Omega;  \quad u= 0 \ \t{on } \ \partial \Omega. }
By the uniqueness of the solution of the Laplace equation, $u_h=0$.  
Thus,  \eqref{finite} has a unique solution. 
 \end{proof}
                     
  In order to have the optimal order $L^2$ convergence,  we need to assume the
    domain $\Omega$ in \eqref{bi} is regular enough,  for example, a square or a 
      cube,  such that 
\an{\label{regular}           |u|_4 \le C \|f\|_0 .   }

\begin{theorem}  Let $ u\in H^{k+1}\cap H^2_0(\Omega)$ be
    the exact solution of the biharmoic equation \eqref{bi}.  
   Let $u_h$ be the $C^1$-$P_k$ finite element solution of \eqref{finite}.   
   Assuming the full-regularity  \eqref{regular}, it holds 
  \a{  \| u- u_h\|_{0} +  h^2  |  u- u_h |_{2}  
         & \le Ch^{k+1} | u|_{k+1}, \quad k\ge 4.  } 
\end{theorem}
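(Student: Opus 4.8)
The plan is to combine the standard quasi-optimality estimate in the energy norm with a duality (Aubin--Nitsche) argument for the $L^2$ estimate, so that the whole proof rests on a single approximation property of the Bell interpolant. First I would record that the bilinear form $a(v,w)=(\Delta v,\Delta w)$ is bounded and coercive on $H^2_0(\Omega)$: for $v\in H^2_0(\Omega)$ one has $a(v,v)=\|\Delta v\|_0^2=|v|_2^2$ by the integration-by-parts identity that trades products of pure second derivatives for products of mixed ones, and norm equivalence on $H^2_0(\Omega)$ then gives coercivity. Since \eqref{finite} is exactly the Galerkin problem for $a(\cdot,\cdot)$ on $V_h\subset H^2_0(\Omega)$, Céa's lemma yields the quasi-optimal bound $|u-u_h|_2\le C\inf_{v_h\in V_h}|u-v_h|_2$.

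The crux is therefore the local approximation property, namely that the canonical interpolant $I_h$ defined by the degrees of freedom \eqref{dof2}--\eqref{dof3} satisfies $|u-I_hu|_2\le Ch^{k-1}|u|_{k+1}$. The key structural fact -- and the whole reason the reduction keeps the optimal order -- is that $P_k\subset V_T$: for $p\in P_k$ and any face $e$, the normal derivative $\partial_{\b n}p|_e$ has total degree at most $k-1$ and hence lies in $Q_{k-1}(e)$, so $p$ meets the constraint in \eqref{V-T}. Given $P_k\subset V_T$, the uni-solvency established in Lemma \ref{l2d} and its three-dimensional counterpart makes $I_h$ well defined and $P_k$-reproducing, and a standard scaling plus Bramble--Hilbert argument on the reference square (the rectangular elements being axis-aligned affine images under which both $Q_k$ and the Bell constraint are preserved) gives $|v-I_hv|_{2,T}\le Ch^{k-1}|v|_{k+1,T}$ on each $T$; summing over the mesh produces the global estimate. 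Feeding this into Céa's lemma yields $h^2|u-u_h|_2\le Ch^{k+1}|u|_{k+1}$, the second term of the claim.

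For the $L^2$ bound I would run the duality argument. Let $\phi$ solve the dual biharmonic problem $\Delta^2\phi=u-u_h$ in $\Omega$ with $\phi=\partial_{\b n}\phi=0$ on $\partial\Omega$; the full-regularity assumption \eqref{regular} gives $|\phi|_4\le C\|u-u_h\|_0$. Testing against $u-u_h$, integrating by parts twice, and then using Galerkin orthogonality $a(u-u_h,I_h\phi)=0$ to subtract the interpolant, I obtain
\[
\|u-u_h\|_0^2=a(u-u_h,\phi-I_h\phi)\le|u-u_h|_2\,|\phi-I_h\phi|_2.
\]
Since $\phi\in H^4$ and $k\ge4$, the same interpolation estimate evaluated at smoothness $4$ gives $|\phi-I_h\phi|_2\le Ch^2|\phi|_4$, so that $\|u-u_h\|_0^2\le Ch^{k-1}|u|_{k+1}\cdot Ch^2|\phi|_4\le Ch^{k+1}|u|_{k+1}\,\|u-u_h\|_0$, and dividing through yields the $L^2$ estimate. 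Adding the two contributions proves the theorem.

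The main obstacle is that essentially everything reduces to the interpolation estimate, and the one genuinely delicate point is verifying $P_k\subset V_T$ together with the $P_k$-reproduction of $I_h$ on the reduced space. This inclusion is precisely the property that is no longer automatic after the space reduction and that distinguishes this element from the triangular Bell element, where one order of convergence is lost; it is what must be checked against the normal-derivative constraint in \eqref{V-T}. Once that inclusion and the uni-solvency of Lemma \ref{l2d} are in hand, the Bramble--Hilbert and duality steps are routine, modulo the usual care with scaling constants on the reference element.
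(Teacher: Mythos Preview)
Your proposal is correct and follows essentially the same route as the paper: coercivity of $(\Delta\cdot,\Delta\cdot)$ on $H^2_0$, C\'ea/Galerkin orthogonality reducing to an interpolation estimate, the key inclusion $P_k\subset V_T$ (which you justify more explicitly than the paper does), and an Aubin--Nitsche duality for the $L^2$ bound. The only cosmetic difference is that in the duality step the paper inserts the finite element solution $w_h$ of the dual problem rather than the interpolant $I_h\phi$; both choices work, though using $w_h$ sidesteps the question of whether the nodal interpolant is defined on mere $H^4$ functions in 3D (where the dofs in \eqref{dof3} involve third-order point values), a point the paper itself handles by appealing to quasi-interpolation/$H^2$-stability results.
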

                        
\begin{proof} The proof is standard.  A proof can be found in \cite{Zhang-bubble-p4}.

         Subtracting \eqref{finite}  from \eqref{bi},  we get
\a{ (\Delta( u- u_h), \Delta v_h)=0\quad \forall v_h\in   V_{h }. }
In $H^2_0(\Omega)$, the $H^2$ semi-norm $|u|_2$ is equivalent to the norm $\|\Delta u\|_0$.
For simplicity, we denote $\|\Delta u\|_0$ by $|u|_2$, i.e., omitting the 
  norm equivalence constants.
Applying the Schwarz inequality,  it follows that
\a{    |   u- u_h|_{2}^2  
     & =  (\Delta(  u-  u_h), \Delta(  u- I_h u))\\ 
     &\le |   u- u_h|_{2} |   u- I_h u |_{2} \le Ch^{k-1} |u|_{k+1}
 |   u- u_h|_{2} ,} 
      where $ I_h  u$ is the nodal interpolation defined by the degrees of freedom, i.e,
\a{  I_h u |_T =\sum_{n=1}^{N_T} F_n(u) \phi_n  }
  with $F_n$ and $\phi_n$   defined in \eqref{dof2} or \eqref{dof3}   and 
    their dual basis functions, respectively.
As $V_T\supset P_{k} (T)$ in \eqref{V-T},  
$I_h$ preserves $P_{k} $ polynomials locally.
We can use the spline theory in \cite{Lai} to prove the $H^2$-stability of $I_h$.
Or we can modify the theory of 
    \cite{Girault} to prove the $H^2$-stability of $I_h$.
Then It is routing to get
   the quasi-optimal approximation of the operator, cf. \cite{Scott-Zhang}.

To prove the $L^2$ convergence,  we introduce a dual problem,
  \an{ \label{d2}
    ( \Delta w, \Delta v) &=(u-u_h, v), 
     \quad w\in H^2_0(\Omega), \ \forall v \in H^2_0(\Omega) . }
Thus, by \eqref{d2}, 
\a{ \|u-u_h\|_0^2 &=(\Delta w, \Delta (u-u_h) ) = 
(\Delta (w-w_h), \Delta (u-u_h) ) \\
  & \le C h^2 |w|_4  h^{k-1} |u|_n \le C h^{k+1} |u|_{k+1}\|u-u_h\|_0, }
where $w_h$ is the finite element solution to the equation \eqref{d2}.
We obtain the $L^2$ error bound. The theorem is proved.
\end{proof}

\section{Numerical Experiments}

In the 2D numerical computation,  we solve the biharmonic equation \eqref{bi}
   on the unit square domain $\Omega=(0,1)\times(0,1)$. 
We choose an $f$ in \eqref{bi} so that the exact solution is
\an{\label{s2}
   u = \sin^2(\pi x)  \sin^2(\pi y) .  }  
 We compute the solution \eqref{s2} on the square grids shown in Figure \ref{f-2}, by 
  the newly constructed $C^1$-$Q_k$ Bell finite elements \eqref{V-h},
    and by the standard $C^1$-$Q_k$ BFS elements.
The results are listed in Tables \ref{t1}-\ref{t4}, 
    where we can see that the optimal orders of convergence 
  are achieved in all cases.  
Though both methods have the same order of convergence,
  we can compare further the constants before the order term $O(h^k)$.
For a low polynomial degree $k$,  the new element has roughly $3/4$ of unknowns and 
   twice the errors of the corresponding $C^1$-$Q_k$ BFS element.
See an example in counting the number of unknowns in Figure \ref{f-2e}.
Though the Bell elements take slightly less time by the conjugate gradient solver,
   it seems to be outperformed by the BFS elements for low $k$.
For a high polynomial degree $k$, the new element has roughly $9/10$ of unknowns and 
   about the same errors of the corresponding $C^1$-$Q_k$ BFS element.
The two methods perform about equally well.
   
\begin{figure}[H]
\begin{center}\setlength\unitlength{2pt}\centering 
 \begin{picture}(140,45)(0,0) \put(0,41){$G_1:$}  \put(50,41){$G_2:$} \put(100,41){$G_3:$} 
  
\def\sq{\begin{picture}(40,40)(0,0)
  \multiput(0,0)(40,0){2}{\line(0,1){40}}\multiput(0,0)(0,40){2}{\line(1,0){40}} \end{picture} }
  
\put(0,0){\begin{picture}(40,40)(0,0)
  \multiput(0,0)(0,40){1}{\multiput(0,0)(40,0){1}{\sq}} 
  \end{picture} }
  
\put(50,0){\setlength\unitlength{1pt}\begin{picture}(40,40)(0,0)
  \multiput(0,0)(0,40){2}{\multiput(0,0)(40,0){2}{\sq}} 
  \end{picture} } 
\put(100,0){\setlength\unitlength{0.5pt}\begin{picture}(40,40)(0,0)
  \multiput(0,0)(0,40){4}{\multiput(0,0)(40,0){4}{\sq}} 
  \end{picture} } 
\end{picture}\end{center}
\caption{The first three rectangular  grids for computing  \eqref{s2} in Tables \ref{t1}--\ref{t4}. }
\label{f-2}
\end{figure}
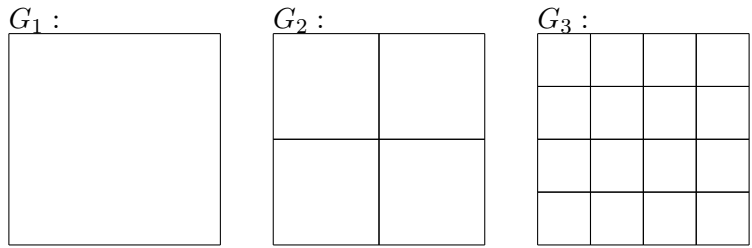

\begin{table}[H]
  \centering  \renewcommand{\arraystretch}{1.1}
  \caption{Error profile on the square meshes shown as in Figure \ref{f-2}, 
     for computing \eqref{s2}. }
  \label{t1}
\begin{tabular}{c|cc|cc|r}
\hline
$G_i$ &  $\| u-u_h\|_{0}$ & $O(h^r)$ & $|u-u_h|_{2}$& $O(h^r)$ & $\dim V_h$  \\ \hline
    &  \multicolumn{5}{c}{ By the new $C^1$-$Q_4$ Bell element \eqref{V-h}. }   \\
\hline  
 1&    0.273E+00 &  0.0&    0.140E+02 &  0.0 &      21\\
 2&    0.537E-02 &  5.7&    0.157E+01 &  3.1 &      52\\
 3&    0.327E-03 &  4.0&    0.356E+00 &  2.1 &     156\\
 4&    0.101E-04 &  5.0&    0.476E-01 &  2.9 &     532\\
 5&    0.306E-06 &  5.1&    0.598E-02 &  3.0 &    1956\\
 6&    0.928E-08 &  5.0&    0.742E-03 &  3.0 &    7492\\\hline 
    &  \multicolumn{5}{c}{ By the standard  $C^1$-$Q_4$ BFS element. }   \\
\hline   
 1&    0.151E+00 &  0.0&    0.113E+02 &  0.0 &      25\\
 2&    0.501E-02 &  4.9&    0.138E+01 &  3.0 &      64\\
 3&    0.165E-03 &  4.9&    0.171E+00 &  3.0 &     196\\
 4&    0.527E-05 &  5.0&    0.214E-01 &  3.0 &     676\\
 5&    0.166E-06 &  5.0&    0.267E-02 &  3.0 &    2500\\
 6&    0.518E-08 &  5.0&    0.334E-03 &  3.0 &    9604\\ 
\hline 
    \end{tabular}%
\end{table}%

\begin{table}[H]
  \centering  \renewcommand{\arraystretch}{1.1}
  \caption{Error profile on the square meshes shown as in Figure \ref{f-2}, 
     for computing \eqref{s2}. }
  \label{t2}
\begin{tabular}{c|cc|cc|r}
\hline
$G_i$ &  $\| u-u_h\|_{0}$ & $O(h^r)$ & $|u-u_h|_{2}$& $O(h^r)$ & $\dim V_h$  \\ \hline
    &  \multicolumn{5}{c}{ By the new $C^1$-$Q_5$ Bell element \eqref{V-h}. }   \\
\hline  
 1&    0.325E-01 &  0.0&    0.423E+01 &  0.0 &      32\\
 2&    0.117E-03 &  8.1&    0.500E-01 &  6.4 &      88\\
 3&    0.802E-05 &  3.9&    0.158E-01 &  1.7 &     284\\
 4&    0.130E-06 &  5.9&    0.958E-03 &  4.0 &    1012\\
 5&    0.206E-08 &  6.0&    0.595E-04 &  4.0 &    3812\\ \hline 
    &  \multicolumn{5}{c}{ By the standard  $C^1$-$Q_5$ BFS element. }   \\
\hline   
 1&    0.943E-02 &  0.0&    0.131E+01 &  0.0 &      36\\
 2&    0.155E-03 &  5.9&    0.762E-01 &  4.1 &     100\\
 3&    0.250E-05 &  6.0&    0.469E-02 &  4.0 &     324\\
 4&    0.395E-07 &  6.0&    0.292E-03 &  4.0 &    1156\\
 5&    0.619E-09 &  6.0&    0.182E-04 &  4.0 &    4356\\
\hline 
    \end{tabular}%
\end{table}%

\begin{table}[H]
  \centering  \renewcommand{\arraystretch}{1.1}
  \caption{Error profile on the square meshes shown as in Figure \ref{f-2}, 
     for computing \eqref{s2}. }
  \label{t3}
\begin{tabular}{c|cc|cc|r}
\hline
$G_i$ &  $\| u-u_h\|_{0}$ & $O(h^r)$ & $|u-u_h|_{2}$& $O(h^r)$ & $\dim V_h$  \\ \hline
    &  \multicolumn{5}{c}{ By the new $C^1$-$Q_6$ Bell element \eqref{V-h}. }   \\
\hline  
 1&    0.110E-02 &  0.0&    0.381E+00 &  0.0 &      45\\
 2&    0.707E-04 &  4.0&    0.496E-01 &  2.9 &     132\\
 3&    0.399E-06 &  7.5&    0.122E-02 &  5.3 &     444\\
 4&    0.319E-08 &  7.0&    0.398E-04 &  4.9 &    1620\\
 5&    0.266E-10 &  6.9&    0.126E-05 &  5.0 &    6180\\ \hline 
    &  \multicolumn{5}{c}{ By the standard  $C^1$-$Q_6$ BFS element. }   \\
\hline   
 1&    0.110E-02 &  0.0&    0.381E+00 &  0.0 &      49\\
 2&    0.707E-04 &  4.0&    0.496E-01 &  2.9 &     144\\
 3&    0.395E-06 &  7.5&    0.114E-02 &  5.4 &     484\\
 4&    0.310E-08 &  7.0&    0.359E-04 &  5.0 &    1764\\
 5&    0.258E-10 &  6.9&    0.112E-05 &  5.0 &    6724\\
\hline 
    \end{tabular}%
\end{table}%

\begin{table}[H]
  \centering  \renewcommand{\arraystretch}{1.1}
  \caption{Error profile on the square meshes shown as in Figure \ref{f-2}, 
     for computing \eqref{s2}. }
  \label{t4}
\begin{tabular}{c|cc|cc|r}
\hline
$G_i$ &  $\| u-u_h\|_{0}$ & $O(h^r)$ & $|u-u_h|_{2}$& $O(h^r)$ & $\dim V_h$  \\ \hline
    &  \multicolumn{5}{c}{ By the new $C^1$-$Q_7$ Bell element \eqref{V-h}. }   \\
\hline  
 1&    0.115E-02 &  0.0&    0.369E+00 &  0.0 &      60\\
 2&    0.681E-06 & 10.7&    0.743E-03 &  9.0 &     184\\
 3&    0.184E-07 &  5.2&    0.788E-04 &  3.2 &     636\\
 4&    0.733E-10 &  8.0&    0.122E-05 &  6.0 &    2356\\
 \hline 
    &  \multicolumn{5}{c}{ By the standard  $C^1$-$Q_7$ BFS element. }   \\
\hline   
 1&    0.115E-02 &  0.0&    0.369E+00 &  0.0 &      64\\
 2&    0.681E-06 & 10.7&    0.743E-03 &  9.0 &     196\\
 3&    0.183E-07 &  5.2&    0.750E-04 &  3.3 &     676\\
 4&    0.731E-10 &  8.0&    0.118E-05 &  6.0 &    2500\\
\hline 
    \end{tabular}%
\end{table}%

In the 3D numerical computation,  we solve the biharmonic equation \eqref{bi}
   on the unit cube domain $\Omega=(0,1)^3$. 
We choose an $f$ in \eqref{bi} so that the exact solution is
\an{\label{s3}
   u = \sin^2(\pi x)  \sin^2(\pi y) \sin^2(\pi z) .  }  
 We compute the solution \eqref{s3} on the cuboid grids shown in Figure \ref{f-3}, by 
  the newly constructed $C^1$-$Q_4$ Bell finite element \eqref{V-h},
    and by the standard $C^1$-$Q_4$ BFS element.

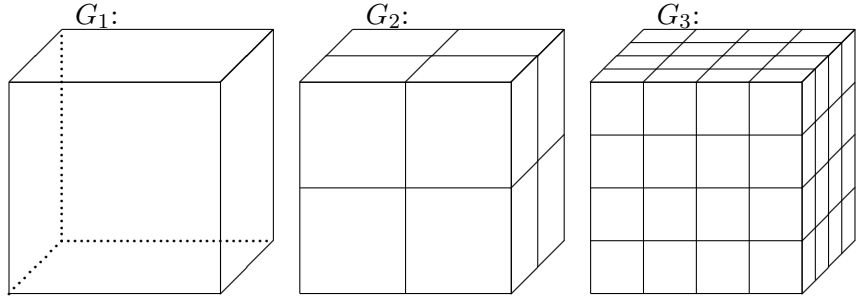
\begin{figure}[H] 
\begin{center}
 \setlength\unitlength{1pt}
    \begin{picture}(320,118)(0,3)
    \put(0,0){\begin{picture}(110,110)(0,0) \put(25,102){$G_1$:}
       \multiput(0,0)(80,0){2}{\line(0,1){80}}  \multiput(0,0)(0,80){2}{\line(1,0){80}}
       \multiput(0,80)(80,0){2}{\line(1,1){20}} \multiput(0,80)(20,20){2}{\line(1,0){80}}
       \multiput(80,0)(0,80){2}{\line(1,1){20}}  \multiput(80,0)(20,20){2}{\line(0,1){80}}
       \multiput(0,0)(2,2){10}{\circle*{1}}   \multiput(20,20)(2.5,0){32}{\circle*{1}}  
        \multiput(20,20)(0,2.5){32}{\circle*{1}}  
      \end{picture}}
    \put(110,0){\begin{picture}(110,110)(0,0)\put(25,102){$G_2$:}
       \multiput(0,0)(40,0){3}{\line(0,1){80}}  \multiput(0,0)(0,40){3}{\line(1,0){80}}
       \multiput(0,80)(40,0){3}{\line(1,1){20}} \multiput(0,80)(10,10){3}{\line(1,0){80}}
       \multiput(80,0)(0,40){3}{\line(1,1){20}}  \multiput(80,0)(10,10){3}{\line(0,1){80}}
      \end{picture}}
    \put(220,0){\begin{picture}(110,110)(0,0) \put(25,102){$G_3$:}
       \multiput(0,0)(20,0){5}{\line(0,1){80}}  \multiput(0,0)(0,20){5}{\line(1,0){80}}
       \multiput(0,80)(20,0){5}{\line(1,1){20}} \multiput(0,80)(5,5){5}{\line(1,0){80}}
       \multiput(80,0)(0,20){5}{\line(1,1){20}}  \multiput(80,0)(5,5){5}{\line(0,1){80}}
%
      \end{picture}}

    \end{picture} 
    \end{center} 
\caption{ The first three  tetrahedral grids for the computation 
    in Table  \ref{t5}.  } 
\label{f-3}
\end{figure}

\begin{table}[H]
  \centering  \renewcommand{\arraystretch}{1.1}
  \caption{Error profile on the cuboid meshes shown as in Figure \ref{f-3}, 
     for computing \eqref{s3}. }
  \label{t5}
\begin{tabular}{c|cc|cc|r}
\hline
$G_i$ &  $\| u-u_h\|_{0}$ & $O(h^r)$ & $|u-u_h|_{2}$& $O(h^r)$ & $\dim V_h$  \\ \hline
    &  \multicolumn{5}{c}{ By the new $C^1$-$Q_4$ 3D Bell element \eqref{V-h}. }   \\
\hline  
 1 &   0.710E-01 &0.00 &   0.260E+01 &0.00 &      89 \\
 2 &   0.671E-02 &3.41 &   0.126E+01 &1.04 &     350 \\
 3 &   0.125E-03 &5.75 &   0.142E+00 &3.15 &    1844 \\
 4 &   0.450E-05 &4.80 &   0.217E-01 &2.72 &   11744 \\
 5 &   0.153E-06 &4.88 &   0.294E-02 &2.88 &   83384 \\
 \hline 
    &  \multicolumn{5}{c}{ By the standard  $C^1$-$Q_7$ 3D BFS element. }   \\
\hline    
 1 &   0.710E-01 &0.00 &   0.260E+01 &0.00 &     125 \\
 2 &   0.671E-02 &3.41 &   0.126E+01 &1.04 &     512 \\
 3 &   0.109E-03 &5.94 &   0.111E+00 &3.51 &    2744 \\
 4 &   0.342E-05 &4.99 &   0.139E-01 &3.00 &   17576 \\
 5 &   0.107E-06 &5.00 &   0.173E-02 &3.00 &  125000 \\ 
\hline 
    \end{tabular}%
\end{table}%

\section*{Ethical Statement}

\subsection*{Compliance with Ethical Standards} { \ } 
   The submitted work is original and is not published elsewhere in any form or language.

This article does not contain any studies involving animals.
This article does not contain any studies involving human participants.

\subsection*{Availability of supporting data}  
   Data sharing is not applicable to this article since no datasets were generated or collected 
 in the work.

\subsection*{Competing interests} 
All authors declare that they have no potential conflict of interest.

\subsection*{Authors' contributions}
All authors made equal contribution.

\end{document}